\documentclass{elsarticle}
\usepackage{natbib}

\usepackage{hyperref,multirow}

\journal{Journal of Computational and Applied Mathematics }








\usepackage{graphicx,subeqn,bbold,amssymb }
\usepackage{algorithm}
\usepackage{algpseudocode}
\bibliographystyle{elsarticle-num}
\newtheorem{theorem}{Theorem}

\newproof{proof}{Proof}
\begin{document}

\begin{frontmatter}

\title{Computing Symmetric Positive Definite Solutions of Three Types of Nonlinear Matrix Equations}

\author[mymainaddress]{Negin Bagherpour}
\author[mymainaddress]{Nezam Mahdavi-Amiri\corref{mycorrespondingauthor}}
\cortext[mycorrespondingauthor]{Corresponding author}
\ead{nezamm@sharif.ir}

\address[mymainaddress]{Department of Mathematical Sciences, Sharif University of Technology, Tehran, Iran}

\begin{abstract}
Nonlinear matrix equations arise in many practical contexts related to control theory, dynamical programming and finite element methods for solving some partial differential equations. In most of these applications, it is needed to compute a symmetric and positive definite solution. Here, we propose new iterative algorithms for solving three different types of nonlinear matrix equations. We have recently proposed a new algorithm for solving positive definite total least squares problems. Making use of an iterative process for inverse of a matrix, we convert the nonlinear matrix equation to an iterative linear one, and, in every iteration, we apply our algorithm for solving a positive definite total least squares problem to solve the linear subproblem and update the newly defined variables and the matrix inverse terms using appropriate formulas. Our proposed algorithms have a number of useful features. One is that the computed unknown matrix remains symmetric and positive definite in all iterations. As the second useful feature, numerical test results show that in most cases our proposed approach turns to compute solutions with smaller errors within lower computing times. Finally, we provide some test results showing that our proposed algorithm converges to a symmetric and positive definite solution in Matlab software environment on a PC, while other methods fail to do so.

\end{abstract}

\begin{keyword}
Nonlinear matrix equations\sep symmetric and positive definite solut-\\ion\sep inverse matrix approximation \sep positive definite total least squares. 
\MSC[2010] 15A24 \sep  65F10
\end{keyword}

\end{frontmatter}


\newcommand{\tr}{\mathop{\mathrm{tr}}}
\section{Introduction}
In several practical applications concerned with solving partial differential equations, control theory and ladder network design a symmetric and positive definite solution of a nonlinear matrix equation needs to be computed; e.g., see \cite{2,5,6}. We consider the nonlinear matrix equation
\begin{equation}\label{19}
X+{\sum}_{i=1}^mA_i^Tf_i(X)A_i=Q,
\end{equation}
with $A_i$, $i=1,\cdots,m,$ and $Q$ being $n\times n$ matrices. For some special set of functions $f_i$, $i=1,\cdots,m,$ equation (\ref{19}) turns to the usual nonlinear equations. Sayed \cite{19} considered a class of nonlinear equations of the general form (\ref{19}) with some special choices of the $f_i$. He introduced an iterative algorithm to solve the equations. Here, we discuss three cases of interest:\\

\textbf{Case 1: $m=1, f_1(X)=X^{-1}.$} This leads to
\begin{equation}\label{20}
X+A^TX^{-1}A=Q,
\end{equation}
which arises in contexts related to control theory; e.g., see \cite{8}. Zhou \cite{8} discussed a method for solving (\ref{20}). A similar equation
\begin{equation}\label{21}
X+A^{\ast}X^{-q}A=Q
\end{equation}
is also concidered, where $0<q\leq 1$; e.g., see \cite{9,21}. In 2005, Hasanov \cite{9} introduced a method for solving (\ref{21}). Also, in 2013, Yin \cite{10} outlined a novel method for solving (\ref{21}). Assuming $A=B+iC$ and $Q=M+iN$, a complex form of (\ref{21}) is defined, whose solution was discussed by Guo \cite{13}.\\

\textbf{Case 2: $m=1, f_1(X)=-X^{-2}.$} Then, the nonlinear equation 
\begin{equation}\label{22}
X-A^TX^{-2}A=Q
\end{equation}
is at hand. This equation arises in solving special types of partial differential equations using finite element methods; e.g., see \cite{5,17}. Ivanov \cite{17} disscused two iterative methods for solving (\ref{22}). Cheng \cite{11} derived a purterbation analysis of the Hermitian solution to (\ref{22}). Also, Sayed \cite{14} and Ivanov \citep{5} introduced iterative methods for solving slightly different equations, $X-A^TX^{-n}A=Q,
$ with $n\geq 2$ being an integer, and $X+A^TX^{-2}A=Q$.\\

\textbf{Case 3: $m=2, f_1(X)=X^{-t_1}, f2_(X)=X^{-t_2}.$} This results in the equation
\begin{equation}\label{23}
X^s+A_1^TX^{-t_1}A_1+A_2^TX^{-t_2}A_2=Q
\end{equation}
with different applications in control theory, dynamic programming and statistics;  e.g., see \cite{6,12}. In 2010, Liu \cite{6} described a method for solving (\ref{23}). Also, Long \cite{12} considered an special case of (\ref{23}) with $t_1=t_2=1$ . After discussing some properties of symmetric and positive definite solutions of the corresponding nonlinear equation, Long outlined an iterative method to solve it. Pei \cite{20} and Duan \cite{18} considered another special case with $A_2=0$. They studied the conditions for existence of a symmetric and positive definite solution to the corresponding nonlinear equation and outlined two different algorithms to compute it.\\ Solving some other nonlinear matrix equations has also been discussed in the litrature. For instance, solutions of the nonlinear equations  
\begin{center}
$X={\sum}_{l=0}^{k-1}P_l^TX^{{\alpha}_l}P_l$
\end{center}
and 
\begin{center}
$X={\sum}_{l=0}^{k-1}{P_l^TXP_l}^{{\alpha}_l}$
\end{center}
have been considered in \cite{15,16}. 

The remainder of our work is organized as follows. In Section 2, we describe our general idea for computing a symmetric positive definite solution to the above three different types of nonlinear matrix equations. In Section 3, we provide the details and outline three algorithms for solving the equations. Computational results and comparisons with available methods are given in Section 4. Section 5 gives our concluding remarks.

\section{The general Idea}
In \cite{1}, we recently proposed a new method for solving a positive definite total least squares problem.
There, the goal was to compute a symmetric and positive definite solution of the over-determined linear system of equations
\begin{equation}\label{1}
DX \simeq T,
\end{equation}
where $D,T \in {\mathbb{R}}^{m \times n},$ with $m \geq n,$ are known, using a total error formulation. Unlike the ordinary 
least squares formulation, in a total formulation both $D$ and $T$ are assumed to contain error. Hence, we proposed an error function
\begin{equation}\label{2}
f(X)=\tr{(DX-T)}^T(D-TX^{-1}),
\end{equation}
with $\tr(\cdot)$ standing for the trace of a matrix. Then, the solution of the positive definite total least squares problem (\ref{1}) was considered to be the symmetric and positive definite matrix $X$ minimizing $f(X)$. In \cite{1}, we proposed positive definite total least squares with Cholesky decomposition algorithm (PDTLS-Chol) and positive definite total least squares with spectral decomposition algorithm (PDTLS-Spec) to compute the solution to the positive definite total least squares problems and showed that PDTLS-Chol has less computational complexity. In both algorithms, the key point is that since the objective function $f(X)$ is strictly convex on the cone of the symmetric and positive definite matrices, the solution to (\ref{2}) is the symmetric and positive definite matrix $X$ satisfying the first order optimality conditions $\nabla f(X)=0$. It was shown that computing such a matrix is possible using the Cholesky or spectral decomposition of $D^TD$. Here, we intend to make use of PDTLS-Chol to compute a symmetric and positive definite solution to some nonlinear 
equations. The general idea is to propose a linear approximation of the nonlinear equation and solve the corresponding linear problem
 in every iteration. To find a proper linear approximation, we define a suitable change of variables. We also make use of the iterative formula 
$Y_{n+1}=Y_n(2I-XY_n)$, as the Newton's iteration, to converge to the solution of $X-Y^{-1}=0$ which is $X^{-1}$; e.g. see, \cite{22,23}. In a total formulation, both the coefficient and the right hand side matrices are assumed to contain error. Hence, an error is also supposed for the inverse term in the linear subproblems and it seems to be a proper idea to approximate these terms by use of iterative formulas.\\

Therefore, in each iteration of our proposed algorithm for solving a nonlinear matrix equation, a symmetric and positive 
definite solution to the linear approximation of the nonlinear equation is computed using PDTLS-Chol. The process is terminated after satisfaction of a proper
stopping criterion.\\

In Section 3, we discuss solving the nonlinear equation $X+{\sum}_{i=1}^mA_i^Tf_i(X)A_i=Q,$ for the specified three cases mentioned above. In the remainder of our work, by solving a nonlinear equation, we mean finding its symmetric and positive definite solution.
\section{Solving the Nonlinear Equations}
\subsection{Case 1: $m=1, f_1(X)=X^{-1}.$}
Here, the goal is to develop an algorithm to solve the nonlinear matrix equation 
 \begin{equation}\label{8}
X+A^TX^{-1}A=Q,
 \end{equation}
with $A, Q \in {\mathbb{R}}^{n \times n}$. Assuming $Q$ to be the $n \times n$ identity matrix, $I$, the matrix equation 
\[X+A^TX^{-1}A=I,\]
is at hand. This equation arises in different contexts including
analysis of ladder networks, dynamic programming, control theory, stochastic filtering and statistics; e.g., see \cite{2,8,3,4}.\\

Letting $Y=X^{-1}$, (\ref{8}) becomes
\[X+A^TYA=Q.\]
We are to make use of the iterative formula
\[Y_{k+1}=Y_k(2I-XY_k)\]
for the $Y_k$ converging to $X^{-1}$. Thus, to solve (\ref{8}), we define the sequences $Y_{k+1}$ and $X_{k+1}$ by
 \begin{subequations}
 \begin{equation}\label{9}
Y_{k+1}=Y_k(2I-X_kY_k),
 \end{equation}
 \begin{equation}\label{10}
X_{k+1}\simeq Q-A^TY_{k+1}A,
 \end{equation}
 \end{subequations}
starting with arbitrary symmetric and positive definite points $X_0, Y_0 \in {\mathbb{R}}^{n \times n}$. Hence, in each iteration of our proposed algorithm for solving (\ref{8}),
after computing $Y_{k+1}$ from (\ref{9}), we perform PDTLS-Chol for $D=I$ and $T=Q-A^TY_{k+1}A$ to compute $X_{k+1}$. In the remainder of our work, by $X=\textrm{PDTLS-Chol}(D,T)$, we mean that $X$ is computed by applying PDTLS-Chol for the input arguments $D$ and $T$. The advantage of this method, as compared to simply letting $X_{k+1}=Q-A^TY_{k+1}A$, is that $X_{k+1}$ remains positive definite in all iterations. A proper
stopping criterion here would be 
\[E=\|X_{k+1}+A^TY_{k+1}A-Q\|\leq \delta+\epsilon \|X_{k+1}\|,\]
where $\delta$ is close to the machine
(or user's) zero, and $\epsilon$ is close to the unit round-off error.\\

Next, we outline the steps of our proposed algorithm for solving (\ref{8}).\\

\begin{algorithm}[H]
\caption{ Solving the Nonlinear Matrix Equation $X+A^TX^{-1}A=Q$: Nonlinear1.}
\label{alg1}
\begin{algorithmic}[1]
\Procedure {Nonlinear1}{$A$, $\delta$, $\epsilon$}
\State Choose the arbitrary symmetric and positive definite matrices $X, Y \in {\mathbb{R}}^{n \times n}$.
\Repeat
\State Let 
\[Y=Y(2I-XY),\]
\Statex $\hspace{1.1cm}$and
\[X=\textrm{PDTLS-Chol}(I,Q-A^TYA).\]
\State Compute $E=\|X+A^TYA-Q\|$.
\Until{$E\leq \delta+\epsilon \|X\|$.}
\EndProcedure
\end{algorithmic}
\end{algorithm}

\subsection{Case 2: $m=1, f_1(X)=-X^{-2}.$}
Here, we consider solving the nonlinear matrix equation
\begin{equation}\label{11}
X-A^TX^{-2}A=Q,
\end{equation}
where $A \in {\mathbb{R}}^{n \times n}$. This equation arises in solving partial differential equations; e.g., see \cite{5}. As before, defining $Y=X^{-1}$, we get
\[Y^{-1}-A^TY^2A=Q.\]
Hence, the iterative equation
\begin{equation}\label{12}
Y_{k+1}^{-1}-A^TY_{k}^2A=Q
\end{equation}
needs to be solved. We make use of the formula 
\begin{equation}\label{13}
X_{k+1}=X_k(2I-Y_{k+1}X_k),
\end{equation}
converging to $Y_{k+1}^{-1}$. Substituting $X_{k+1}$ in (\ref{12}), we get
\[X_k(2I-Y_{k+1}X_k)-A^TY_{k}^2A=Q,\]
or equivalently,
\[2I-X_kY_{k+1}-A^TY_{k}^2A{X_k}^{-1}-Q{X_k}^{-1}=0.\]
Hence, $Y_{k+1}$ can be computed using 
\begin{equation}\label{14}
Y_{k+1}=\textrm{PDTLS-Chol}(X_k,2I-A^TY_{k}^2A{X_k}^{-1}-Q{X_k}^{-1}).
\end{equation}
Thus, in every iteration of our proposed algorithm, starting from arbitrary symmetric and positive definite matrices $Y_0, X_0\in {\mathbb{R}}^{n\times n}$, we compute $Y_{k+1}$ from (\ref{14}) and $X_{k+1}$ from (\ref{13}). A proper stopping criterion would be 
\[\|X_{k+1}-A^TY_{k+1}^2A-Q\|<\delta+\epsilon \|X_{k+1}\|,\]
with $\delta$ and $\epsilon$ as defined in Case 1. Now, $X_{k+1}$ gives an approximate solution of (\ref{11}). The described steps for solving (\ref{11}) are summerized in the following algorithm.

\begin{algorithm}[H]
\caption{ Solving the Nonlinear Matrix Equation $X+A^TX^{-2}A=Q$: Nonlinear2.}
\label{alg2}
\begin{algorithmic}[1]
\Procedure {Nonlinear2}{$A$, $\delta$}
\State Choose arbitrary symmetric and positive definite matrices $X, Y \in {\mathbb{R}}^{n \times n}$.
\Repeat
\State Let 
\[Y=\textrm{PDTLS-Chol}(X,2I-A^TY^2AX^{-1}-QX^{-1}).\]
\State Let
\[X=X(2I-YX).\]
\State Compute $E=\|X-A^TY^2A-Q\|$.
\Until{$E\leq \delta+\epsilon \|X\|$.}
\EndProcedure
\end{algorithmic}
\end{algorithm}

\subsection{Case 3: $m=2, f_1(X)=X^{-t_1}, f2_(X)=X^{-t_2}.$} The nonlinear matrix equation 
\begin{equation}\label{15}
X^s+A_1^TX^{-t_1}A_1+A_2^TX^{-t_2}A_2=Q
\end{equation}
has applications in different areas such as control theory and dynamical programming; e.g., see \cite{6}. To solve (\ref{15}), we make use of the same change of variables as given in Section 3.2, $Y=X^{-1}$. Substituting $Y$ in (\ref{15}), we get
\[X^s+A_1^TY^{t_1}A_1+{A_2}^TY^{t_2}A_2=Q.\]
Thus, the iterative equation 
\begin{equation}
X_{k+1}^s+A_1^TY_k^{t_1}A_1+A_2^TY_k^{t_2}A_2=Q
\end{equation}
is generated, which is equivalent to
\begin{eqnarray}
U=\textrm{PDTLS-Chol}(I,Q-A_1^TY_k^{t_1}A_1-A_2^TY_k^{t_2}A_2),\label{16}\\
X_{k+1}=U^{\frac{1}{s}}.\nonumber
\end{eqnarray}
To update $Y_k$ to $Y_{k+1}$, one iteration of the formula,
\begin{equation}\label{17}
Y_{k+1}=Y_k(2I-X_{k+1}Y_k),
\end{equation}
is applied. Thus, in each iteration of our proposed algorithm for solving (\ref{15}), starting from an arbitrary symmetric and positive definite $n\times n$ matrix $Y_0$, we compute $X_{k+1}$ using (\ref{16}) and then apply (\ref{17}) to compute $Y_{k+1}$. Instead of starting from an arbitrary matrix $Y_0$, as suggested in \cite{6}, $Y_0=(\frac{\gamma+1}{2\gamma})Q^{-\frac{1}{s}}$ is a suitable starting point. The stopping condition can be set to $E=\|X_{k+1}^s+A_1^TY_k^{t_1}A_1+A_2^TY_k^{t_2}A_2-Q\|<\delta+\epsilon \|X_{k+1}^s\|$ with $\delta$ and $\epsilon$ as before. We now outline our proposed algorithm for solving (\ref{15}).
\begin{algorithm}[H]
\caption{ Solving the Nonlinear Matrix Equation $X^s+A_1^TX^{-t_1}A_1+A_2^TX^{-t_2}A_2=Q$: Nonlinear3.}
\label{alg3}
\begin{algorithmic}[1]
\Procedure {Nonlinear3}{$A_1$, $A_2$, $Q$, $s$, $t_1$, $t_2$, $\delta$}
\State Choose arbitrary symmetric and positive definite matrix $Y \in {\mathbb{R}}^{n \times n}$.
\Repeat
\State Let 
\[U=\textrm{PDTLS-Chol}(I,Q-A_1^TY^{t_1}A_1-A_2^TY^{t_2}A_2)\]
\Statex $\hspace{1.1cm}$and
\[X=U^{\frac{1}{s}}.\]
\State Compute $Y=Y(2I-XY),$ and compute $E=\|U+A_1^TY^{t_1}A_1+$
\Statex $\hspace{1.1cm}A_2^TY^{t_2}A_2-Q\|$.
\Until{$E\leq \delta+\epsilon \|U\|$.}
\EndProcedure
\end{algorithmic}
\end{algorithm}

\paragraph{Note 1}
In \cite{6}, an iterative algorithm was proposed for solving (\ref{15}). An advantage of Algorithm 3 is that in all iterations, $X_k$ remains positive definite because of using PDTLS-Chol for updating $X_k$. 

\paragraph{Note 2}
Considering Case 3 with $m>2$ results in the nonlinear matrix equation
\begin{equation}\label{18}
X^s+{\sum}_{i=1}^mA_i^TX^{-t_i}A_i=Q.
\end{equation}
The nonlinear equation (\ref{18}) arises in the same contexts as (\ref{15}) including control theory and dynamical programming; e.g., see \cite{7}. A similar procedure to Algorithm 3 can be applied to solve (\ref{18}). In each iteration, it is appropriate to let
\[U=\textrm{PDTLS-Chol}(I,Q-{\sum}_{i=1}^mA_i^TY^{t_i}A_i),\]
\[X=U^{\frac{1}{s}},\]
and
\[Y=Y(2I-XY).\] 
The process would be terminated when $E=\|U+{\sum}_{i=1}^mA_i^TY^{t_i}A_i-Q\|<\delta+\epsilon \|U\|$.\\

Next, in Section 4, we discuss the necessary and sufficient conditions for the existence of solution for the three considered cases above.
\section{Existence of Solution}
In \cite{6}, the necessary and sufficient conditions for the exitence of a symmetric and positive definite solution to Case 3 were disscussed. Here, we first recall the conditions in the following theorem and then make use of the theorem for special choices of parameters to provide the necessary and sufficient conditions for the existence of a symmetric and positive definite solution for Case 1. Finally, we point out a theorem from \cite{17} about the sufficient conditions for the existence of a positive definite solution for Case 2.
\begin{theorem}\label{th1}
Case 3 has a unique symmetric and positive definite solution if and only if $A_1$ and $A_2$ can be factored as $A_1={(LL^T)}^{\frac{t_1}{2s}}N_1$ and $A_2={(LL^T)}^{\frac{t_2}{2s}}N_2$, so that the matrix $\left(\begin{array}{c}
  LQ^{-\frac{1}{2}} \\
  N_1Q^{-\frac{1}{2}} \\
  N_2Q^{-\frac{1}{2}}
\end{array}\right)$ has orthogonal columns.
\end{theorem}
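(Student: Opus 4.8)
The statement is an equivalence, so the plan is to prove the two implications separately; in both directions the engine is a change of variables that replaces $X$ by a symmetric square-root-type factor and turns the matrix equation into a single Gram-matrix identity. A preliminary observation, used throughout, is that if $X\succ0$ solves the equation then $Q=X^s+A_1^{T}X^{-t_1}A_1+A_2^{T}X^{-t_2}A_2\succeq X^s\succ0$, so $Q$ is positive definite and $Q^{-1/2}$ is available; moreover every fractional power appearing below is that of a positive definite matrix, defined through the spectral calculus, so powers of one fixed matrix commute and their exponents add.

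\emph{Necessity.} Suppose $X\succ0$ is a solution. Put $L:=X^{s/2}$, which is symmetric positive definite, and $N_i:=X^{-t_i/2}A_i$ for $i=1,2$. Since $L$ is symmetric, $L^{T}L=LL^{T}=X^s$, hence $(LL^{T})^{t_i/(2s)}=(X^{s})^{t_i/(2s)}=X^{t_i/2}$, so $A_i=X^{t_i/2}N_i=(LL^{T})^{t_i/(2s)}N_i$, which is the required factorization. Substituting it into the equation, the middle terms collapse via $A_i^{T}X^{-t_i}A_i=N_i^{T}N_i$, leaving $L^{T}L+N_1^{T}N_1+N_2^{T}N_2=Q$; conjugating by $Q^{-1/2}$ turns this into $(LQ^{-1/2})^{T}(LQ^{-1/2})+(N_1Q^{-1/2})^{T}(N_1Q^{-1/2})+(N_2Q^{-1/2})^{T}(N_2Q^{-1/2})=I$, which is exactly the assertion that the matrix formed by stacking $LQ^{-1/2}$, $N_1Q^{-1/2}$ and $N_2Q^{-1/2}$ has orthonormal columns.

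\emph{Sufficiency.} Conversely, start from a factorization $A_i=(LL^{T})^{t_i/(2s)}N_i$ whose stacked matrix has orthonormal columns. Replacing $L$ by $(LL^{T})^{1/2}$ alters neither $LL^{T}$ nor the $N_i$, so we may assume $L$ is positive definite; set $X:=(LL^{T})^{1/s}\succ0$, so that $X^s=LL^{T}=L^{T}L$ and, as above, $A_i=X^{t_i/2}N_i$ and $A_i^{T}X^{-t_i}A_i=N_i^{T}N_i$. Reading the orthonormality of the columns back through $Q^{-1/2}$ gives $L^{T}L+N_1^{T}N_1+N_2^{T}N_2=Q$, and adding the pieces shows that $X$ solves the equation. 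In particular this argument already shows that the factorization condition is equivalent to the bare \emph{solvability} of the equation.

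The subtle word is \emph{unique}, and that is where I expect the real work to lie: solvability by itself does not obviously pin the solution down. The natural route is to exhibit solutions as fixed points of $\Phi(X)=\bigl(Q-A_1^{T}X^{-t_1}A_1-A_2^{T}X^{-t_2}A_2\bigr)^{1/s}$ on a suitable order interval of positive definite matrices below $Q^{1/s}$, to check that $\Phi$ is monotone in the Loewner order (the maps $X\mapsto X^{-t_i}$ being order-reversing and $Z\mapsto Z^{1/s}$ order-preserving, combined with the sign change in front of the $X^{-t_i}$ terms), and then to sandwich any solution between the two monotone sequences obtained by iterating $\Phi$ from the two ends of the interval, forcing them to coincide. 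The delicate point here is the behaviour of the fractional powers under the order --- operator monotone/antitone function theory, together with the extra care needed when one of $s,t_1,t_2$ makes an exponent exceed $1$ --- and this is precisely the analysis carried out in \cite{6}, which I would cite rather than reproduce.
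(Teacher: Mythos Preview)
The paper does not actually prove this theorem: its entire proof is the line ``See \cite{6}.'' Your proposal therefore goes well beyond what the paper itself offers. The existence equivalence you write out --- setting $L=X^{s/2}$, $N_i=X^{-t_i/2}A_i$ in one direction and $X=(LL^{T})^{1/s}$ in the other, and reading the equation as the Gram identity $L^{T}L+N_1^{T}N_1+N_2^{T}N_2=Q$ --- is exactly the standard argument behind the result in \cite{6}, and you correctly isolate \emph{uniqueness} as the part that genuinely needs the monotone fixed-point analysis, which you (like the paper) defer to \cite{6}.

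One point in your sufficiency direction deserves tightening. You write that replacing $L$ by $(LL^{T})^{1/2}$ ``alters neither $LL^{T}$ nor the $N_i$'', and then assume $L\succ0$. That replacement indeed preserves $LL^{T}$, hence the factorizations $A_i=(LL^{T})^{t_i/(2s)}N_i$; but the column-orthonormality hypothesis is a statement about $L^{T}L$ (it reads $Q^{-1/2}L^{T}LQ^{-1/2}+\cdots=I$), and $L^{T}L\neq LL^{T}$ for non-normal $L$. So the reduction ``we may assume $L$ positive definite'' is not free. In practice the statement is meant to be read with the symmetric $L$ your necessity argument produces (so $L^{T}L=LL^{T}$), and with ``orthogonal columns'' understood as ``orthonormal columns''; once that is made explicit, your argument is clean.
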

\begin{proof}
See \cite{6}.$\hspace{1cm}\square$
\end{proof}
\begin{theorem}\label{th2}
Case 1 has a unique symmetric and positive definite solution if and only if $A$ can be factored as $A={(LL^T)}^{\frac{1}{2}}N$, with $Q^{-\frac{1}{2}}L^TLQ^{-\frac{1}{2}}+Q^{-\frac{1}{2}}N^TNQ^{-\frac{1}{2}}$ being a diagonal matrix.
\end{theorem}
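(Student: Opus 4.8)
The plan is to derive the statement directly from Theorem~\ref{th1} by specializing the parameters of Case~3. First I would observe that the Case~1 equation $X+A^TX^{-1}A=Q$ is exactly the Case~3 equation $X^s+A_1^TX^{-t_1}A_1+A_2^TX^{-t_2}A_2=Q$ under the choice $s=1$, $t_1=1$, $A_1=A$, $A_2=0$ (the value of $t_2$ being irrelevant, since its term vanishes); one could alternatively avoid a zero block entirely by writing $A^TX^{-1}A=(A/\sqrt 2)^TX^{-1}(A/\sqrt 2)+(A/\sqrt 2)^TX^{-1}(A/\sqrt 2)$, but the zero-block reduction is cleaner. Consequently, $X$ is the unique symmetric positive definite solution of Case~1 if and only if it is the unique symmetric positive definite solution of Case~3 for these parameters, so the asserted equivalence follows by rewriting the conclusion of Theorem~\ref{th1} in this special case. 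Note that $Q\succ 0$ here, since $Q=X+A^TX^{-1}A\succeq X\succ 0$, so $Q^{-\frac{1}{2}}$ is well defined, exactly as presupposed in Theorem~\ref{th1}.

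Next I would carry out that rewriting. With $s=t_1=1$ the exponent $\frac{t_1}{2s}$ equals $\frac{1}{2}$, so the factorization of $A_1$ in Theorem~\ref{th1} becomes $A=(LL^T)^{\frac{1}{2}}N_1$; putting $N:=N_1$ yields the form $A=(LL^T)^{\frac{1}{2}}N$ in the statement. The factorization of $A_2$ becomes $0=(LL^T)^{t_2/(2s)}N_2$. Since the matrix $L$ supplied by Theorem~\ref{th1} is nonsingular — it is constructed from the (positive definite) solution itself, so $LL^T\succ 0$ and $(LL^T)^{t_2/(2s)}$ is invertible — this forces $N_2=0$; and for the converse direction $N_2=0$ is an admissible choice once $A_2=0$. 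Substituting $N_2=0$, the stacked matrix in Theorem~\ref{th1} reduces (its third block now vanishing) to $\left(\begin{array}{c} LQ^{-\frac{1}{2}}\\ NQ^{-\frac{1}{2}}\end{array}\right)$, and the requirement that its columns be pairwise orthogonal is, by definition of orthogonal columns, that
\[
Q^{-\frac{1}{2}}L^TLQ^{-\frac{1}{2}}+Q^{-\frac{1}{2}}N^TNQ^{-\frac{1}{2}}
\]
be a diagonal matrix. This is precisely the condition in the statement, and combining the two implications inherited from the ``if and only if'' of Theorem~\ref{th1} completes the argument.

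I expect the one point requiring genuine care to be the passage between the two forms of the condition: namely, justifying that in both directions one may take $N_2=0$ (equivalently, that the factorization in Theorem~\ref{th1} may be assumed to have a nonsingular $L$), and reconciling the ``orthogonal columns of a block matrix'' phrasing of Theorem~\ref{th1} with the explicit diagonality requirement on $Q^{-\frac{1}{2}}L^TLQ^{-\frac{1}{2}}+Q^{-\frac{1}{2}}N^TNQ^{-\frac{1}{2}}$. Everything else is a routine substitution of the values $s=t_1=1$, $A_2=0$ into Theorem~\ref{th1}.
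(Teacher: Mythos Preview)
Your proposal is correct and follows essentially the same approach as the paper: specialize Theorem~\ref{th1} to $s=1$, $t_1=1$, $A_1=A$, $A_2=0$, obtain the factorization $A=(LL^T)^{1/2}N$, drop the vanishing third block, and rewrite the orthogonal-columns condition on $\left(\begin{smallmatrix}LQ^{-1/2}\\ NQ^{-1/2}\end{smallmatrix}\right)$ as diagonality of $Q^{-1/2}L^TLQ^{-1/2}+Q^{-1/2}N^TNQ^{-1/2}$. The only cosmetic difference is that the paper fixes $t_2=0$ (so $A_2=(LL^T)^0N_2=N_2$ and $N_2=0$ is immediate), whereas you leave $t_2$ free and instead argue $N_2=0$ via the invertibility of $(LL^T)^{t_2/2}$; both routes are fine and your version is, if anything, more carefully justified.
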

\begin{proof}
Making use of Theorem \ref{th1} for $t_1=1$, $t_2=0$, $A_2=0$ and $s=1$, it can be concluded that Case 1 has a symmetric and positive definite solution if and only if $A$ can be factored as $A={(LL^T)}^{\frac{1}{2}}N$ such that $\left(\begin{array}{c}
  LQ^{-\frac{1}{2}} \\
  NQ^{-\frac{1}{2}}
\end{array}\right)$ has orthogonal columns, or equivalently $Q^{-\frac{1}{2}}L^TLQ^{-\frac{1}{2}}+Q^{-\frac{1}{2}}N^TNQ^{-\frac{1}{2}}$ is a diagonal matrix.$\hspace{1cm}\square$ \\
\end{proof}
In the following theorem, sufficient conditions for existence of a symmetric and positive definite solution to Case 2 are given. 
\begin{theorem}\label{th3}
If there exists an $\alpha >2$ such that
\begin{subequations}
\begin{equation}\label{24}
AA^T>{\alpha}^2(\alpha-1)I,
\end{equation}
\begin{equation}\label{25}
\sqrt{\frac{AA^T}{\alpha-1}}-\frac{1}{{\alpha}^2}AA^T<I,
\end{equation}
\begin{equation}\label{26}
\frac{{\|A\|}^2}{2\alpha{(\alpha -1)}^2}<1,
\end{equation}
\end{subequations}
then, Case 2 has a symmetric and positive definite solution.
\end{theorem}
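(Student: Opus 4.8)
The plan is to recast equation~(\ref{22}) as a fixed-point problem on the cone of symmetric positive definite matrices and to resolve it by a contraction argument. Define $\mathcal{G}(X):=Q+A^{T}X^{-2}A$, so that a symmetric positive definite $X$ solves (\ref{22}) exactly when $X=\mathcal{G}(X)$. I would seek such a fixed point inside a closed, bounded, convex subset $\Omega$ of the positive definite cone -- a spectral order interval whose endpoints are scalar multiples of $I$ (resp.\ simple functions of $AA^{T}$), calibrated by $\alpha$ and the extreme eigenvalues of $AA^{T}$ -- and proceed in two steps: first show $\mathcal{G}(\Omega)\subseteq\Omega$, then show $\mathcal{G}$ is a contraction on $\Omega$ in the spectral norm. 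Since $\Omega$ is a closed subset of the complete space of symmetric matrices, the Banach fixed-point theorem then produces a fixed point of $\mathcal{G}$ in $\Omega$, which is the asserted solution; this also underlies the convergence of the iteration in Algorithm~\ref{alg2}. (For existence alone one could instead observe that $\Omega$ is convex and compact and $\mathcal{G}$ continuous and invoke Brouwer's theorem; the statement claims only existence, not uniqueness, unlike Theorems~\ref{th1} and~\ref{th2}.)

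The first step is where all three hypotheses enter and is, I expect, the main obstacle. For $X\in\Omega$ the eigenvalue bounds on $X$ translate into two-sided bounds $c_{1}I\preceq X^{-2}\preceq c_{2}I$, whence $Q+c_{1}A^{T}A\preceq\mathcal{G}(X)\preceq Q+c_{2}A^{T}A$; requiring the right-hand side to stay below the upper endpoint of $\Omega$ and the left-hand side to stay above the lower endpoint reduces, after passing to the eigenvalues of $AA^{T}$, to a pair of scalar inequalities coupling the two endpoints. Hypothesis (\ref{24}), $AA^{T}\succ\alpha^{2}(\alpha-1)I$, is what renders the lower inclusion solvable (note that $\alpha^{2}(\alpha-1)$ is the value of $t^{2}(t-1)$ at $t=\alpha$, the natural scale for the lower endpoint), while hypothesis (\ref{25}), $\sqrt{AA^{T}/(\alpha-1)}-\alpha^{-2}AA^{T}\prec I$, is precisely what makes the upper inclusion compatible with it once the lower endpoint has been fixed -- the operator square root there arising from solving the endpoint coupling. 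The delicate part is simply exhibiting one admissible choice of endpoints for which (\ref{24}) and (\ref{25}) together deliver $\mathcal{G}(\Omega)\subseteq\Omega$; this uses only monotonicity of $t\mapsto\sqrt{t}$ and of $t\mapsto t^{-2}$ on the relevant scalar ranges, but it is bookkeeping-intensive.

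For the second step, given $X,Y\in\Omega$ write $\mathcal{G}(X)-\mathcal{G}(Y)=A^{T}(X^{-2}-Y^{-2})A$ and use the identities $X^{-2}-Y^{-2}=X^{-1}(X^{-1}-Y^{-1})+(X^{-1}-Y^{-1})Y^{-1}$ and $X^{-1}-Y^{-1}=X^{-1}(Y-X)Y^{-1}$, together with the uniform bound $\|X^{-1}\|,\|Y^{-1}\|\le\beta^{-1}$ that holds on $\Omega$ (where $\beta$ is the lower endpoint fixed in step one), to get $\|\mathcal{G}(X)-\mathcal{G}(Y)\|\le 2\beta^{-3}\|A\|^{2}\,\|X-Y\|$; a sharper estimate of the Fr\'echet derivative of $X\mapsto A^{T}X^{-2}A$ over $\Omega$ tightens the constant. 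Hypothesis (\ref{26}), $\|A\|^{2}<2\alpha(\alpha-1)^{2}$, is exactly the requirement that forces this Lipschitz constant below $1$ for the value of $\beta$ coming from step one, so $\mathcal{G}$ is a contraction on $\Omega$ and Banach's theorem finishes the proof. As noted, the substantive difficulty is not any individual estimate but the simultaneous calibration in step one, so that (\ref{24}), (\ref{25}) and (\ref{26}) cover respectively the lower inclusion, the upper inclusion, and the contraction; this is the content of the corresponding result of Ivanov in~\cite{17}.
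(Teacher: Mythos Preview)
The paper does not actually prove this theorem: its entire proof is the single line ``See \cite{17}.'' There is therefore no argument in the paper to compare your proposal against; the result is imported wholesale from Ivanov's work.

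Your outline---recasting $X-A^{T}X^{-2}A=Q$ as the fixed-point equation $X=\mathcal{G}(X):=Q+A^{T}X^{-2}A$, trapping $\mathcal{G}$ in a spectral order interval $\Omega$ via (\ref{24}) and (\ref{25}), and then using (\ref{26}) to force the Lipschitz constant below $1$ so Banach applies---is a coherent and standard strategy for this class of equations, and you yourself correctly identify it as ``the content of the corresponding result of Ivanov in~\cite{17}.'' In that sense your proposal is not merely consistent with the paper but supplies what the paper omits. One caveat worth flagging: the hypotheses (\ref{24})--(\ref{26}) involve only $A$ and $\alpha$, with no dependence on $Q$, which strongly suggests that the cited result in \cite{17} is stated for the normalized equation $X-A^{T}X^{-2}A=I$ (i.e.\ $Q=I$); your $\mathcal{G}$ carries a general $Q$, so when you carry out the calibration of $\Omega$ you should expect to specialize to $Q=I$ for the stated conditions to close. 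Beyond that, your sketch leaves the explicit endpoints of $\Omega$ and the verification of $\mathcal{G}(\Omega)\subseteq\Omega$ at the level of ``bookkeeping,'' which is honest but means the proposal is an outline rather than a proof; filling that in would require consulting \cite{17} or reproducing its computations.
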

\begin{proof}
See \cite{17}.$\hspace{1cm}\square$
\end{proof}
\begin{theorem}\label{th4}
If all of the singular values of $A$, ${\sigma}_i$, for $i=1, \cdots, n$, satisfy $\alpha\sqrt{\alpha-1}<{\sigma}_i<\sqrt{2\alpha}(\alpha-1)$, for an $\alpha>2$, then the conditions (\ref{24})-(\ref{26}) are satisfied. Thus, Case 2 has a symmetric and positive definite solution.
\end{theorem}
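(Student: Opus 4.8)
The plan is to reduce Theorem \ref{th4} to Theorem \ref{th3} by showing that the single scalar hypothesis on the singular values implies all three matrix inequalities (\ref{24})--(\ref{26}). Since $AA^T$ is symmetric positive semidefinite with eigenvalues exactly $\sigma_1^2, \ldots, \sigma_n^2$, each of the conditions (\ref{24})--(\ref{26}) is really a statement about these eigenvalues, and the matrix functions appearing in (\ref{25}) (namely $\sqrt{AA^T/(\alpha-1)}$ and $AA^T/\alpha^2$) are all functions of $AA^T$ acting on the same eigenbasis. So the whole theorem should follow by diagonalizing $AA^T$ once and checking three scalar inequalities on each eigenvalue $\lambda_i = \sigma_i^2$.

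First I would write $AA^T = P\Lambda P^T$ with $\Lambda = \mathrm{diag}(\sigma_1^2,\ldots,\sigma_n^2)$. Then (\ref{24}) becomes $\sigma_i^2 > \alpha^2(\alpha-1)$ for all $i$, which is exactly the left half of the hypothesis $\alpha\sqrt{\alpha-1} < \sigma_i$ after squaring. For (\ref{26}), note $\|A\|^2 = \max_i \sigma_i^2$, so $\|A\|^2/(2\alpha(\alpha-1)^2) < 1$ is equivalent to $\sigma_i^2 < 2\alpha(\alpha-1)^2$ for all $i$, i.e. $\sigma_i < \sqrt{2\alpha}(\alpha-1)$, the right half of the hypothesis. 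So (\ref{24}) and (\ref{26}) together are literally equivalent to the stated singular-value bounds. The only real work is (\ref{25}): after diagonalization it reads $\sqrt{\lambda_i/(\alpha-1)} - \lambda_i/\alpha^2 < 1$ for every $\lambda_i = \sigma_i^2 \in (\alpha^2(\alpha-1), 2\alpha(\alpha-1)^2)$.

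The main obstacle is therefore verifying that scalar inequality on the relevant interval. I would treat $g(\lambda) = \sqrt{\lambda/(\alpha-1)} - \lambda/\alpha^2$ as a function of $\lambda > 0$: it is concave (its second derivative is $-\frac14(\alpha-1)^{-1/2}\lambda^{-3/2} < 0$), so on the interval $[\alpha^2(\alpha-1), 2\alpha(\alpha-1)^2]$ it is maximized either at an interior critical point or — since a concave function on an interval attains its max at the critical point if it lies inside — I would locate the critical point $g'(\lambda)=0$, giving $\lambda^\star = \tfrac{\alpha^4}{4(\alpha-1)}$, and evaluate $g(\lambda^\star) = \tfrac{\alpha^2}{2(\alpha-1)} - \tfrac{\alpha^2}{4(\alpha-1)} = \tfrac{\alpha^2}{4(\alpha-1)}$. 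It then remains to check $\tfrac{\alpha^2}{4(\alpha-1)} < 1$, i.e. $\alpha^2 - 4\alpha + 4 = (\alpha-2)^2 < 0$ — which fails. This tells me the bound cannot come from the global max of $g$; instead the endpoints matter, so I would need to verify that the interval $(\alpha^2(\alpha-1), 2\alpha(\alpha-1)^2)$ does \emph{not} contain $\lambda^\star$, or more carefully evaluate $g$ at the two endpoints and use monotonicity/concavity to bound $g$ on the subinterval actually in play. Checking whether $\lambda^\star = \alpha^4/(4(\alpha-1))$ lies left of, inside, or right of $[\alpha^2(\alpha-1), 2\alpha(\alpha-1)^2]$ for all $\alpha>2$, and then bounding $g$ accordingly at the governing endpoint, is exactly the delicate computation the proof hinges on; everything else is bookkeeping with the spectral theorem.
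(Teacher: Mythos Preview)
Your reduction to scalar eigenvalue inequalities is exactly right, and your treatment of (\ref{24}) and (\ref{26}) is complete. The only unfinished business is (\ref{25}), and the ``delicate computation'' you flagged is in fact a two-line check. Compare $\lambda^\star=\alpha^4/(4(\alpha-1))$ with the left endpoint $\alpha^2(\alpha-1)$: the inequality $\lambda^\star<\alpha^2(\alpha-1)$ is equivalent to $\alpha^2<4(\alpha-1)^2$, i.e.\ $(\alpha-2)^2>0$, which holds for every $\alpha>2$. Hence $g$ is strictly decreasing on $[\alpha^2(\alpha-1),\infty)$. Evaluating at the left endpoint gives $g(\alpha^2(\alpha-1))=\sqrt{\alpha^2}-(\alpha-1)=1$, so on the \emph{open} interval $\lambda_i>\alpha^2(\alpha-1)$ you get $g(\lambda_i)<1$, which is (\ref{25}). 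Note that the upper bound on $\sigma_i$ is not used here at all; it is needed only for (\ref{26}).

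The paper reaches the same conclusion for (\ref{25}) by an algebraic route rather than your calculus one. Writing $U=\sqrt{AA^T}$, it observes from (\ref{24}) that $\sqrt{\alpha-1}\,U-\tfrac{\alpha^2}{2}I>\tfrac{\alpha}{2}(\alpha-2)I>0$, squares both sides (legitimate since everything commutes with $U$), and rearranges to obtain $\tfrac{U}{\sqrt{\alpha-1}}-\tfrac{1}{\alpha^2}U^2<I$. In scalar terms this is the factorisation $1-g(\lambda)=\alpha^{-2}\bigl(u-\alpha\sqrt{\alpha-1}\bigr)\bigl(u-\alpha/\sqrt{\alpha-1}\bigr)$ with $u=\sqrt{\lambda}$, both roots lying at or below $\alpha\sqrt{\alpha-1}$ when $\alpha>2$. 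Your concavity argument and the paper's completing-the-square argument are thus two presentations of the same fact; the paper's version avoids locating the critical point, while yours makes the monotonicity on the relevant interval more transparent.
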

\begin{proof}
The eigenvalues of $AA^T$ are equal to ${\lambda}_i={\sigma}_i^2$, $i=1, \cdots, n$. Since $\alpha\sqrt{\alpha-1}<{\sigma}_i$, we have ${\alpha}^2(\alpha-1)<{\lambda}_i$ and (\ref{24}) is satisfied. Let $U=\sqrt{AA^T}$. From (\ref{24}), we get
\[U>({\alpha}\sqrt{(\alpha-1)})I,\]
and
\[\sqrt{(\alpha-1)}U-\frac{{\alpha}^2}{2}I>\frac{\alpha}{2}(\alpha-2)I.\]
Thus, we have
\begin{eqnarray}\nonumber
(\sqrt{(\alpha-1)}U-\frac{{\alpha}^2}{2}I)^2&>&(\frac{{\alpha}^4}{4}-{\alpha}^3+{\alpha}^2)I\nonumber\\
&=&\frac{{\alpha}^2}{4}{(\alpha-2)}^2I.\label{28}
\end{eqnarray}
Hence, (\ref{28}) results in
\[{\alpha}^2U-\sqrt{(\alpha-1)}U^2<\sqrt{(\alpha-1)}{\alpha}^2I,\]
or equivalently,
\[\frac{U}{\sqrt{(\alpha-1)}}-\frac{1}{{\alpha}^2}U^2<I.\]
Now, substituting $U$ with $\sqrt{AA^T}$ gives (\ref{25}). Finally, we show that under the assumption ${\sigma}_i<\sqrt{2\alpha}(\alpha-1)$, (\ref{26}) holds. It is sufficient to substitute ${\|A\|}^2$ with $\max ({\lambda}_i)$. Since $\max ({\lambda}_i)<2\alpha{(\alpha-1)}^2$, we have ${\|A\|}^2<2\alpha{(\alpha-1)}^2$.$\hspace{1cm}\square$\\
\end{proof}

Consequently, to generate a test problem in Case 1 having a symmetric and positive definite solution, it is sufficient to choose $A$ with singular values satisfying $\alpha\sqrt{(\alpha-1)}<{\sigma}_i<\sqrt{2\alpha}(\alpha-1)$, for an $\alpha>2$. In the next section, we will use the above results to generate test problems with symmetric and positive definite solutions.
\section{Numerical Results}
We made use of MATLAB 2014a in a Windows 7 machine with a 2.4 GHz CPU and a 6 GB RAM to implement our proposed algorithms and other methods. We then applied the programs on some existing test problems as well as newly generated ones. The numerical results corresponding to cases 1, 2 and 3 are respectively reported in sections 5.1, 5.2 and 5.3.\\

In Section 5.1, first a table is provided in which the selected values for the matrices $A$ and $Q$ are reported. We then report the computing time and the resulting error for solving the nonlinear equation (\ref{8}) using our proposed method and an existing method due to Zhou \cite{8}. In one of these examples, a complex value for the matrix $A$ has been chosen to affirm that our proposed method is applicable to both real and complex problems.\\

In Section 5.2, the computing times and the resulting errors in solving the nonlinear equation (\ref{11}), for almost the same $A$ and $Q$ matrices as given in Section 5.1, are reported using our proposed method and the method discussed by Ivanov \cite{5}. \\

In Section 5.3, we first provide three tables to present the values of $A_1$, $A_2$, $Q$, $s$, $t_1$ and $t_2$ for our test problems. We then report the computing times and the error values for solving the nonlinear equation (\ref{15}) using our proposed method and the method described by Liu \cite{6}. \\

Furthermore, in each section, we generate 100 random test problems satisfying the sufficient conditions for existence of a symmetric and positive definite solution discussed in Section 4. Representing the Dolan-Mor\'{e} time and error profiles, we confirm the efficiency of our proposed algrithms for solving the random test problems. \\

To generate these test problems for cases 1, 2 and 3, we use the results of theorems 2, 4 and 1. respectively. For Case 1, assuming $A={(LL^T)}^{\frac{1}{2}}N$ and $Q=I$, the matrix $\left(\begin{array}{c}
 L \\
  N
\end{array}\right)$ is needed to have orthogonal columns. Thus, to generate a test problem for Case 1 with a symmetric and positive definite solution, it is sufficient to set\\\\
\textbf{Pseudocode 1}
\begin{verbatim}
Q1=qr(rand(2*n));
Q2=Q1(:,1:n);
L=Q2(1:n,:);
N=Q2(n+1:2*n,:);
A=(L*L')^(0.5)*N;
\end{verbatim}
in MATLAB. Similarly, for Case 3, to generate a test problem having a symmetric and positive definite solution, we set \\\\
\textbf{Pseudocode 2}
\begin{verbatim}
Q1=qr(rand(3*n));
Q2=Q1(:,1:n);
L=Q2(1:n,:);
N1=Q2(n+1:2*n);
N2=Q2(2*n+1:3*n);
A1=(L*L')^(t1/(2*s))*N1;
A2=(L*L')^(t2/(2*s))*N2;
\end{verbatim}
Finally, generating a test problem for Case 2 with a symmetric and positive definite solution is possible using the pseudocode below for an arbitrary $\alpha>2$:\\\\
\textbf{Pseudocode 3}
\begin{verbatim}
s1=alpha(sqrt(alpha-1))
s2=sqrt(2alpha)(alpha-1)
d=(s2-s1)*rand(n,1)+s1;
D=diag(d);
U=qr(rand(n));
V=qr(rand(n));
A=U*D*V';
\end{verbatim}
\paragraph{Note} In pseudocodes 1, 2 and 3, the QR factorizations of $2n\times 2n$, $3n\times 3n$ and $n\times n$ matrices need to be computed. Thus, the computing complexity of Pseudocode 3 is lower than the others and as seen in numerical results, it is possible to generate larger test problems for Case 2 without encountering a memory problem.
\subsection{Results for solving (\ref{8})}
In Table \ref{t1}, we see four test examples, assuming $Q=I$ with different values for $A$, to test Nonlinear1 for solving (\ref{8}).
\newpage
\begin{table}[!ht]
\caption{The values of $A$ in test problems.}
\label{t1}
\begin{center}\footnotesize
\begin{tabular}{|c|c|}
    \hline
     &  \\
   Example 1 & $A=\left(
                                               \begin{array}{cccc}
                                                 0.0955 & 0.0797 &  0.0848 & 0.0575\\
                                                0.0920  & 0.0114  & 0.0583 &  0.0010\\
                                                0.0385  & 0.0159  & 0.0586  & 0.0809\\
                                                   0.0163  & 0.0356  & 0.0926  & 0.0609
                                               \end{array}
                                             \right)$\\
                                              &  \\
                                               \hline
                                                &  \\
                                               Example 2 & $A=\left(
                                               \begin{array}{cccc}
                                                 0.8862 & 0.8978 & 0.8194 & 0.4279\\
                                                 0.9311 &  0.5934 & 0.5319 & 0.9661\\
                                                 0.1908 & 0.5038 & 0.2021 & 0.6201 \\
                                                 0.2586 &  0.6128 & 0.4539 & 0.6954
                                               \end{array}
                                             \right)$ \\                              
                                              &  \\
\hline
 &  \\
Example 3 & $A=A_1+iA_2$  \\
& \\
 \hline
  &  \\
Example 4 & $A=\left(
                                               \begin{array}{cccccc}
       0.0450 & 0.0440 & 0.0900 & 0.0660 & 0.0470 &   0.0060\\
       0.0810 & 0.0680 & 0.0550 & 0.0700 & 0.0460 &   0.0140\\
       0.0930 & 0.0470 & 0.0750 & 0.0920 & 0.0810 &  0.0170 \\
       0.0670 & 0.0950 & 0.0120 & 0.0660 & 0.0820 & 0.0630\\
       0.0370 & 0.0350 & 0.0450 & 0.0690 & 0.0190 &  0.0030\\
       0.0410 & 0.0340 & 0.0070 & 0.0850 & 0.0030 & 0.0470
                                               \end{array}
                                             \right)$\\
                                              &  \\
    \hline
\end{tabular}
\end{center}
\end{table}

\begin{center}

$A_1=\left(\begin{array}{cccccc}
0.0320  &  0.0540  & 0.0220 & 0.0370  & 0.0190  & 0.0860\\
0.0120  & 0.0650 & 0.0110 & 0.0760   & 0.0140 & 0.0480 \\
0.0940  &  0.0540 &  0.0110 & 0.0630  & 0.0700  &  0.0390 \\
0.0650  & 0.0720 & 0.0060 & 0.0770  & 0.0090  &  0.0670 \\
0.0480  & 0.0520  & 0.0400 & 0.0930 & 0.0530  & 0.0740 \\
0.0640   & 0.0990 &  0.0450  & 0.0970  & 0.0530 & 0.0520
                                               \end{array}
                                             \right)$
                                             $\vspace{1cm}$
\end{center}

                                             \begin{center}
                                             
                                             $A_2=\left(\begin{array}{cccccc}
0.0350 &  0.0240 & 0.0680 & 0.0270   & 0.0770 & 0.0790\\
0.0150 & 0.0440 &  0.0700 & 0.0200   & 0.0400 & 0.0950\\
 0.0590 &  0.0690 &  0.0440 &  0.0820  & 0.0810 &  0.0330\\
0.0260 & 0.0360 & 0.0020 &  0.0430  & 0.0760 &  0.0670\\
0.0040  &  0.0740 & 0.0330  & 0.0890  &  0.0380 &  0.0440\\
 0.0750  &  0.0390 &  0.0420  & 0.0390   &  0.0220  & 0.0830
                                               \end{array}
                                             \right)$
                                             \end{center}
\normalsize
In Table \ref{t2}, the computed solutions to (\ref{8}) are reported for the considered examples.

\begin{table}[!ht]
\caption{Computed positive definite solutions of (\ref{8}) using Nonlinear1 for examples 1 - 4 as reported in Table \ref{t1}.}
\label{t2}
\begin{center}\footnotesize
\begin{tabular}{|c|c|}
    \hline
     &  \\
   Example 1 & $X=\left(
                                               \begin{array}{cccc}
    1.0009  &  0.0007  &  0.0012  &  0.0009\\
    0.0007  &  1.0005  &  0.0009  & 0.0007\\
    0.0012  &  0.0009  &  1.0016  &  0.0013\\
    0.0009  &  0.0007  &  0.0013  &  1.0010
                                               \end{array}
                                             \right)$\\
                                              &  \\
                                               \hline
                                                &  \\
                                               Example 2 & $X=\left(
                                               \begin{array}{cccc}
                                                 1.9517 &  1.1863  & 0.9614 & 1.0964\\
                                                 1.1863 &  2.3866  & 1.1153 & 1.2316\\
                                                 0.9614 &  1.1153  & 1.9041 & 0.9967 \\
                                                 1.0964 &  1.2316  & 0.9967 & 0.6954
                                               \end{array}
                                             \right)$
                                             \\                              
                                              &  \\
\hline
 &  \\
Example 3 & $X=X_1+iX_2$  \\
& \\
 \hline
  &  \\
Example 4 &  $X=\left(
                                               \begin{array}{cccccc}
       1.2245 & 0.1924 & 0.1858 & 0.2775 & 0.1651 &  0.0823\\
       0.1924 & 1.1642 & 0.1606 & 0.2382 & 0.1422 &  0.0699\\
       0.1858 & 0.1606 & 1.1569 & 0.2301 & 0.1380 &  0.0679 \\
       0.2775 & 0.2382 & 0.2301 & 1.3441 & 0.2027 & 0.1016\\
       0.1651 & 0.1422 & 0.1380 & 0.2027 & 1.1221 &  0.0596\\
       0.0823 & 0.0699 & 0.0679 & 0.1016 & 0.0596 & 1.0295
                                               \end{array}
                                             \right)$ \\
                                              &  \\
    \hline
\end{tabular}
\end{center}
\end{table}

\begin{center}

$X_1=\left(\begin{array}{cccccc}
1.4837 &  0.5838 &  0.3177 &  0.6582  & 0.4726  & 0.6214 \\
   0.5838  & 1.7355  &  0.4195 &  0.8271  & 0.6059 &  0.7910 \\
   0.3177 &  0.4195 & 1.2801 &  0.4660  & 0.3809 &  0.4745\\
   0.6582 &  0.8271  &  0.4660   & 1.9316   & 0.6778 & 0.8879 \\
   0.4726 & 0.6059  & 0.3809  & 0.6778 & 1.5422  & 0.6681 \\
   0.6214 &  0.7910 &  0.4745  & 0.8879  & 0.6681 &  1.8703 
                                               \end{array}
                                             \right)$\end{center} 
                                             \begin{center}
                                             $X_2=\left(\begin{array}{cccccc}
0.0000 &  0.0547 &  0.1716 &  0.0498  & 0.1856  &  0.1349\\
   - 0.0547  & 0.0000 &  0.1793 &  - 0.0210   &  0.1711 &  0.0981\\
   0.1716 & -0.1793 & 0.0000 &  -0.2160  & - 0.0492 & - 0.1332\\
  - 0.0498 &  0.0210 &   0.2160  & - 0.0000 & 0.2104  &  0.1339\\
  - 0.1856 & - 0.1711 & 0.0492 & - 0.2104 & - 0.0000 & - 0.1071\\
    - 0.1349 &  - 0.0981 &  0.1332 & - 0.1339 & 0.1071 &  0.0000
                                               \end{array}
                                             \right)$
\end{center}         
\normalsize              
Table \ref{t3} represents the error values, $E=\|X+A^TX^{-1}A-Q\|$, the computing times, $T$, and the number of iterations, $n_{It}$, for solving (\ref{8}) in examples 1 through 4 using our proposed algorithm, Nonlinear1, and the method introduced by Zhou \cite{8}, denoted by Zhou's algorithm.
\newpage
\begin{table}[htbp]
\caption{Error values, computing times and number of iterations for Nonlinear1 and Zhou's algorithm.}
\label{t3}
\begin{center}\footnotesize
\begin{tabular}{|c|c|c|c|c|c|c|}
\hline
       {\multirow{2}{*}{Example}} &  \multicolumn{3}{c|}{Nonlinear1}  &    \multicolumn{3}{c|}{Zhou's algorithm}\\
     \cline{2-7}
        &  $E$  &  $T$  &   $n_{It}$ & $E$  &  $T$  &   $n_{It}$  \\
      
    \hline
1 & 5.78E-010 & 1.21E-004 & 5 & 1.64E-009 & 2.87E-003 & 4\\
2 & 1.33E-011 & 7.11E-004 & 4 & 3.26E-010 & 1.13E-003 & 6\\
3 & 2.71E-010 & 2.14E-003 & 9 & 5.61E-009 & 4.31E-002 & 9\\
4 & 2.36E-010 & 1.11E-004 & 3 & 1.27E-009 & 7.87E-004 & 8\\
\hline
\end{tabular}
\end{center}
\end{table}
\normalsize

Considering the results in Table \ref{t3}, it is concluded that our proposed method for solving (\ref{8}) computes a symmetric and positive definite solution faster than Zhou's algorithm. The error values however are almost the same. We also compare these two algorithms in solving 100 random test problems later. These test problems are generated randomly using Pseudocode 1. In these test problems, the size of the matrix $A$ is taken to be $5\times 5$, $10\times 10$, $100 \times 100$ or $1200 \times 1200$. It should be mentioned that our proposed algorithm was capable of computing the solution corresponding to a $1200\times 1200$ matrix while the Zhou's algorithm encountered a memory problem. In figures \ref{f1} and \ref{f2}, the Dolan-Mor\'{e} time and error profiles are represented to confirm the efficiency of our proposed algorithm in solving (\ref{8}), showing more efficiency and lower error values.

\begin{figure}[ht!]
       \includegraphics[width=0.7\textwidth]{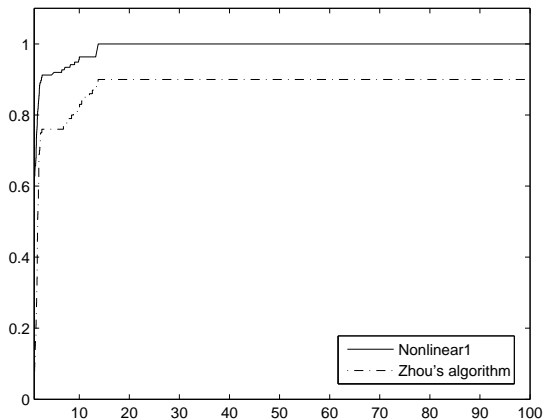}
     \caption{Comparing the computing times by Nonlinear1 and Zhou's algorithms.}
     \label{f1}
\end{figure}
\newpage
\begin{figure}[ht!]
       \includegraphics[width=0.7\textwidth]{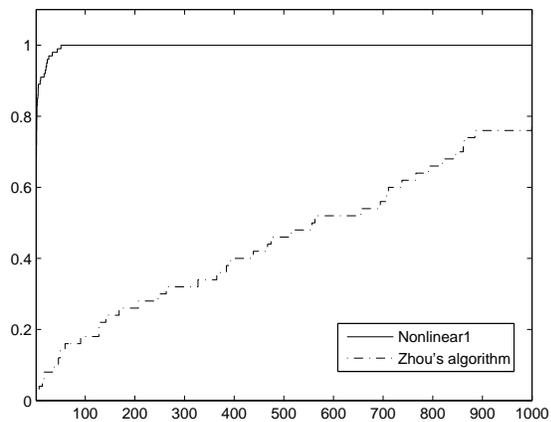}
     \caption{Comparing the error values of Nonlinear1 and Zhou's algorithms.}
     \label{f2}
\end{figure}

\subsection{Results for solving (\ref{11})}
Here, we compare the results of solving (\ref{11}) with use of our proposed algorithm, Nonlinear2, and the algorithm due to Ivanov \cite{17}. The assumed values for $A$ and $Q$ are almost the same as the ones in Section 5.1. The only difference is the value of $A$ in Example 3. The matrix $A$ here is asuumed to be
\begin{center}
$A=\left(\begin{array}{cccc}
-0.1 & -0.1 & 0.02 & 0.08\\
-0.09 & 0.3 & -0.2 & -0.1\\
-0.04 & 0.1 & 0.01 & -0.1\\
-0.08 & -0.06 & -0.1 & -0.2
\end{array}
 \right).$
\end{center}
In Table \ref{t4}, we report the computed solution $X$ of (\ref{11}) using Nonlinear2 for the four examples and with $Q$ being equal to $I$.
\newpage
\begin{table}[!ht]
\caption{Computed positive definite solutions of (\ref{11}) using Nonlinear2 for examples 1 through 4.}
\label{t4}
\begin{center}\footnotesize

\begin{tabular}{|c|c|}
    \hline
     &  \\
   Example 1 & $X=\left(
                                               \begin{array}{cccc}
    0.7138  & 0.4637 &   0.4443 &  -0.3512\\
    0.4637 &   1.3535  &  0.1059  &  0.0952\\
    0.4443  &  0.1059  &  0.9837  &  0.1273\\
   -0.3512  &  0.0952  &  0.1273  &  1.1561
                                               \end{array}
                                             \right)$\\
                                              &  \\
                                               \hline
                                                &  \\
                                               Example 2 & $X=\left(
                                               \begin{array}{cccc}
    0.3837  & 0.0774  & -0.0886 & -0.0423\\
    0.0774 &  0.6145 & 0.2798 &  0.3775\\
   -0.0886  & 0.2798  &  0.7603  & 0.6386\\
   -0.0423  &  0.3775 &  0.6386  &  0.7550
                                               \end{array}
                                             \right)$
                                             \\                              
                                              &  \\
\hline
 &  \\
Example 3 &  $X=\left(
                                               \begin{array}{cccccc}
                                                0.9877 &   0.0125  &  0.0068 &   0.0170\\
    0.0125  &  1.0821 &  -0.0433  & -0.0525\\
    0.0068  & -0.0433  &  1.0540  &  0.0560\\
    0.0170  & -0.0525  &  0.0560 &   1.0621                                               \end{array}
                                             \right)$  \\
& \\
 \hline
  &  \\
Example 4 &  $X=\left(
                                               \begin{array}{cccccc}
 1.1818  &  0.0117  & -0.0053  & -0.0099  &  0.0477 & -0.0952\\
 0.0117  &  0.9994  & -0.0202  & -0.0168 &  0.0000 & -0.0160\\
 -0.0053 & -0.0202  & 0.9526   & -0.0482 & -0.0181 & -0.0341\\
 -0.0099 & -0.0168  & -0.0482  &  0.9554 & -0.0181 & -0.0289\\
  0.0477 & 0.0000   & -0.0181  & -0.0181 &  1.0107 & -0.0345\\
 -0.0952 & -0.0160  & -0.0341  &  -0.0289  & -0.0345 &  1.0267
                                               \end{array}
                                             \right)$ \\
                                              &  \\
    \hline
\end{tabular}
\end{center}
\end{table}

\normalsize
In Table \ref{t5}, to compare our proposed method in solving (\ref{11}) for examples 1 through 4 with the method due to Ivanov \cite{17}, denoted by Ivanov's algorithm, we report error values, $E=\|X-A^TX^{-2}A-Q\|$, the computing times, $T$, and the number of iterations, $n_{It}$.

\begin{table}[htbp]
\caption{Computing times, error values and number of iterations for Nonlinear2 and Ivanov's algorithm.}
\label{t5}
\begin{center}\footnotesize
\begin{tabular}{|c|c|c|c|c|c|c|}
    \hline
       {\multirow{2}{*}{Case}} &  \multicolumn{3}{c|}{Nonlinear 2}  &    \multicolumn{3}{c|}{Ivanov Algorithm}\\
     \cline{2-7}
        &  $E$  &  $T$  &   $n_{It}$ & $E$  &  $T$  &   $n_{It}$ \\
      
    \hline
1 & 8.61E-011 & 1.23E-003 & 2 & 1.64E+000 & 5.32E+000 & 27\\
2 & 6.79E-010 & 9.24E-004 & 3 & 1.09E-009 & 1.11E-002 & 13\\
3 & 1.28E-010 & 2.59E-002 & 55 & 3.77E+000 & 1.14E-001 & 100\\
4$^{\ast}$ & 2.51E-011 & 7.85E-004 & 12 & 5.43E+000 & 2.04E-002 & 28\\
\hline
\end{tabular}
\end{center}
\end{table}
\normalsize
\vspace{0.1cm}
$\ast:$ Chosen from \cite{17}.\\
\newpage
The reported results in Table \ref{t5} show that our proposed algorithm computes the symmetric and positive definite solution to (\ref{11}) faster and with lower error values than Ivanov's algorithm. The Dolan-Mor\'{e} time and error profiles are presented in figures \ref{f3} and \ref{f4} to confirm the efficiency of our proposed algorithm in computing a symmetric and positive definite solution to (\ref{11}) over randomly generated test problems using Pseudocode 3. The size of the matrix $A$ is taken to be $10\times 10$, $100 \times 100$, $1000 \times 1000$ or $3000\times 3000$. Although both algorihms were able to compute the solutions for large matrices, as shown in figures \ref{f3} and \ref{f4}, the computing times and the error values are lower for our proposed algorithm.

\begin{figure}[ht!]
       \includegraphics[width=0.7\textwidth]{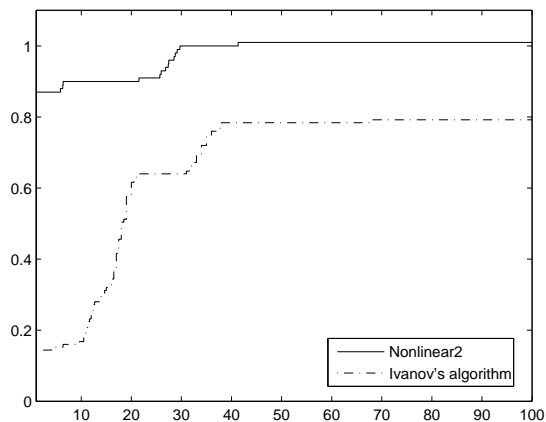}
     \caption{Comparing the computing times by Nonlinear2 and Ivanov's algorithm.}
     \label{f3}
\end{figure}
\newpage
\begin{figure}[ht!]
       \includegraphics[width=0.7\textwidth]{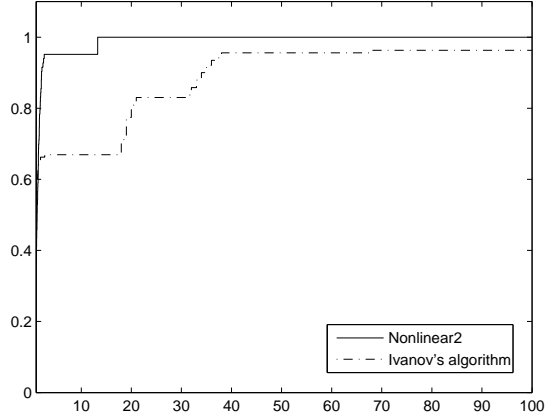}
     \caption{Comparing the error values of Nonlinear2 and Ivanov's algorithm.}
     \label{f4}
\end{figure}

\subsection{Results for solving (\ref{15})}
Two test problems are reported in Table \ref{t6}. The first one is chosen from \cite{6}.\\

\begin{table}[!ht]
\caption{The values of $A$, $B$, $Q$, $s$, $t_1$ and $t_2$ for the test problems.}
\label{t6}
\begin{center}\footnotesize
\begin{tabular}{|c|c|}
    \hline
Example 1$^{\ast}$ & Example 2\\
\hline
$A=\left(
                                               \begin{array}{cccccc}
     2  &   0   &  0   &  1  &   0  &   0\\
     1  &   2 &    0  &   0   &  1   &  0\\
     0   &  0  &   3   &  0  &   1  &   0\\
     1   &  0   &  0   &  2   &  0  &   1\\
     1   &  0   &  1   &  0  &   3  &   0\\
     0   &  1   &  0   &  0  &   1   &  2
                                               \end{array}
                                             \right)$ & $A=\left(
                                               \begin{array}{cccc}
0.5853 & 0\\
0 & 0.5497
                                               \end{array}
                                             \right)$ \\       
                                             \hline
$B=\left(
                                               \begin{array}{cccccc}
   
     2   &  1   &  6  &   0  &   5  &   7\\
     3  &   4   &  7  &   1  &   3  &   0\\
     0  &   9   &  2  &   4  &   7  &   8\\
     8  &   5  &   3  &   0  &   0  &   1\\
     2  &   5  &   0   &  2  &   1  &   7\\
     4   &  0   &  0  &   1   &  4  &   9
                                               \end{array}
                                             \right)$ & $B=\left(
                                               \begin{array}{cccc}
0.9172 & 0\\
0 & 0.2858
                                               \end{array}
                                             \right)$ \\       
                                             \hline
$Q=\left(
                                               \begin{array}{cccccc}

   105 &   66  &  58  & 15  &  41  &  73\\
    66 &  154  &  67  &  50  &  88 &  121\\
    58  &  67 &  109  &  15  &  71  &  61\\
    15  &  50 &   15  &  28  &  37 &   57\\
    41  &  88 &   71  &  37 &  113 &  136\\
    73 &  121 &   61  &  57 &  136 &  250

                                               \end{array}
                                             \right)$ & $Q=\left(
                                               \begin{array}{cccc}
0.3786 & 0\\
0 & 0.3769
                                               \end{array}
                                             \right)$ \\                             
    \hline
$s=5,\hspace{0.2cm} t_1=0.2,\hspace{0.2cm}t_2=0.5$ & $s=2,\hspace{0.2cm} t_1=t_2=0.5$\\
\hline
\end{tabular}
\end{center}
\end{table}
\vspace{0.1cm}
$\ast:$ Chosen from \cite{6}.\\

In Table \ref{t7}, the error values, $E$, the computing times, $T$ and the number of iterations, $n_{It}$, are reported for solving (\ref{15}) for examples 1 and 2 as in Table \ref{t6}, using our proposed method, Nonlinear3, and the method given by Liu \cite{6}, denoted by Liu's algorithm.

\begin{table}[htbp]
\caption{Error values, computing times and number of iterations for Nonlinear3 and Liu's algorithm.}
\label{t7}
\begin{center}\footnotesize
\begin{tabular}{|c|c|c|c|c|c|c|}
    \hline
       {\multirow{2}{*}{Case}} &  \multicolumn{3}{c|}{Nonlinear 3}  &    \multicolumn{3}{c|}{Liu Algorithm}\\
     \cline{2-7}
        &  $E$  &  $T$  &   $n_{It}$ & $E$  &  $T$  &   $n_{It}$  \\
      
    \hline
1 & 1.71E-008 & 5.64E-003 & 3 & 1.12 & 4.41E-004 & 1\\
1 & 1.52E-011 & 8.38E-002 & 8 & 4.33E+097 & 1.31E-002 & 10\\
2 & 3.93E-011 & 3.35E-002 & 29 & 3.93E-011 & 1.50E-002 & 29\\

\hline
\end{tabular}
\end{center}
\end{table}
 As seen in Table \ref{t7}, our proposed method for solving (\ref{15}) computes the solution faster and with lower error values in example 1 and both methods perform exactly the same for the second example. Also, 100 random test problems were generated using Pseudocode 2. Here, the matrices $A_1$ and $A_2$ are taken to be $5\times 5$, $10\times 10$, $100 \times 100$ or $1000\times 1000$. For $1000\times 1000$ matrices $A_1$ and $A_2$, our proposed algorithm could compute the solution while Liu's algorithm encountered a memory problem. The Dolan-Mor\'{e} time and error profiles for these test problems are presented in figures \ref{f5} and \ref{f6}. Considering figures \ref{f5} and \ref{f6}, it can be concluded that our proposed algorithm for solving (\ref{15}) computes the symmetric and positive definite solution faster and with lower error values. 
 
 \begin{figure}[ht!]
       \includegraphics[width=0.7\textwidth]{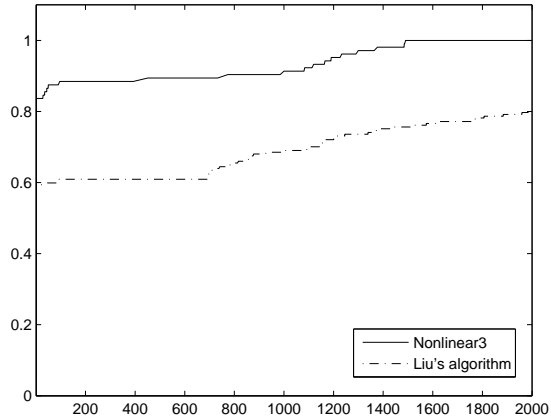}
     \caption{Comparing the computing time for Nonlinear3 and Liu's algorithms.}
     \label{f5}
\end{figure}

\begin{figure}[ht!]
       \includegraphics[width=0.7\textwidth]{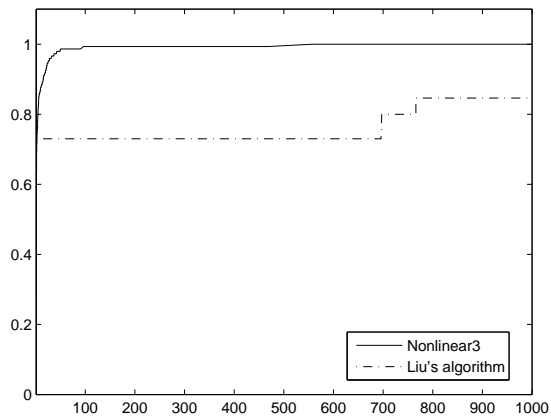}
     \caption{Comparing the error values for Nonlinear3 and Liu's algorithms.}
     \label{f6}
\end{figure}
 
\section*{Concluding Remarks}
Making use of our recently proposed method for solving positive definite total least squares poblems, we presented iterative methods for solving three types of nonlinear matrix equations. We provided linear iterative formulas by use of special convergent formulas and dedfining proper change of variables. To solve the resulting linear peoblems, we used our recently proposed method for solving total positive definite least squares problems (PDTLS-Chol). Compared with other methods, use of PDTLS-Chol for solving the linear problems offers two useful features. First, the solution in all iterations remains positive definite. Second, in a total formulation, as used in PDTLS-Chol, the right hand side matrix of a linear system is also assumed to contain error, and hence approximation of the inverse in the right hand side of the linear equations is not problematic. We outlined three specific algorithms for solving the three types of nonlinear matrix equations having applications in control theory and numerical solutions of partial differential equations. We then experimented with some existing test problems as well as our randomly generated ones for each of the three problem types and reported the corresponding numerical results. Comparing with the existing methods, the reported Dolan-Mor\'{e} profiles confirm the effectiveness of our proposed algorithms in computing a positive definite solution with lower error values and lower computing times.

\section*{Acknowledgements}
The authors thank Research Council of Sharif University of Technology for supporting this work.

\section*{References}

\bibliography{mybibfile}

\end{document}